\numberwithin{equation}{section}
\numberwithin{figure}{section}
\theoremstyle{plain}
\newtheorem{thm}{\protect\theoremname}
  \theoremstyle{plain}
  \newtheorem*{thm*}{\protect\theoremname}
  \theoremstyle{plain}
  \newtheorem{lem}[thm]{\protect\lemmaname}
  \theoremstyle{remark}
  \newtheorem{rem}[thm]{\protect\remarkname}
  \theoremstyle{plain}
  \newtheorem{prop}[thm]{\protect\propositionname}
  \theoremstyle{plain}
  \newtheorem{cor}[thm]{\protect\corollaryname}
  \providecommand{\corollaryname}{Corollary}
  \providecommand{\lemmaname}{Lemma}
  \providecommand{\propositionname}{Proposition}
  \providecommand{\remarkname}{Remark}
  \providecommand{\theoremname}{Theorem}
\providecommand{\theoremname}{Theorem}
\begin{document}

\title{Langlands parameters associated to special maximal parahoric spherical
representations}

\author{Manish Mishra}

\email{mmishra@math.huji.ac.il}

\address{Einstein Institute of Mathematics, The Hebrew University of Jerusalem,
Jerusalem, 91904, Israel}
\begin{abstract}
We describe the image, under the local Langlands correspondence for
tori, of the characters of a torus which are trivial on its Iwahori
subgroup. Let $k$ be a non-archimedian local field. Let $\boldsymbol{G}$
be a connected reductive group defined over $k$, which is quasi-split
and split over a tamely ramified extension. Let $K$ be a special
maximal parahoric subgroup of $\boldsymbol{G}(k)$. To the class of
representations of $\boldsymbol{G}(k)$, having a non-zero vector
fixed under $K$, we establish a bijection, in a natural way, with
the twisted semi-simple conjugacy classes of the inertia fixed subgroup
of the dual group $\hat{\boldsymbol{G}}$. These results generalize
the well known classical results to the ramified case. 
\end{abstract}
\maketitle

\section*{Introduction}

Let $k$ be a non-archimedian local field. Let $\boldsymbol{G}$ be
an unramified connected reductive group defined over $k$ and let
$\pi$ be a smooth, irreducible, admissible representation of $\boldsymbol{G}$
which is unramified, i.e., there is a hyperspecial subgroup $K$ of
$\boldsymbol{G}(k)$ such that the $K$-invariant subspace $\pi^{K}$
of the space realizing $\pi$ is non-zero. One can associate a Langlands
parameter to the representation $\pi$ via the following recipe.

The representation $\pi$ corresponds, in a natural way, to a character
of the Hecke algebra $\mathscr{H}(\boldsymbol{G}(k),K)$. Then via
the Satake isomorphism, this character corresponds to an unramified
character $\chi$ of $\boldsymbol{T}(k)$ for certain maximal torus
$\boldsymbol{T}$. $ $This character $\chi$ is unique up to the
relative Weyl group conjugacy. The character $\chi$, under the local
Langlands correspondence for tori, corresponds to a Langlands parameter\linebreak{}
$\varphi_{\chi}\in\mbox{H}^{1}(W_{k},\hat{\boldsymbol{T}})$, where
$\hat{\boldsymbol{T}}$ is the dual torus and $W_{k}$ is the Weil
group. This parameter is induced from a cocycle in $\mbox{H}^{1}(W_{k}/I_{k},\hat{\boldsymbol{T}})$,
where $I_{k}$ is the inertia subgroup of $W_{k}$. Using this, one
can then associate to the parameter $\varphi_{\chi}$ a semisimple
conjugacy class in $\hat{\boldsymbol{G}}\rtimes(W_{k}/I_{k})$, where
$\hat{\boldsymbol{G}}$ is the complex dual of $\boldsymbol{G}$.
This semisimple conjugacy class describes the Langlands parameter
associated to $\pi$. 

All these classic results are well known and can be found in \cite{MR546593}.
We wish to find analogous statements when $\boldsymbol{G}$ is not
necessarily unramified. Let $\boldsymbol{G}$ be quasi-split and tamely
ramified. Let $K$ be a special maximal parahoric subgroup of $\boldsymbol{G}$
and let $\pi$ be a smooth, irreducible, admissible representation
of $\boldsymbol{G}$ which is $K$-spherical, i.e., $\pi^{K}\neq0$.
We associate to $\pi$, a character $\chi$ of $\boldsymbol{T}(k)$
of a certain maximal torus $\boldsymbol{T}$ in a similar way as above,
using the description of special maximal parahoric Hecke algebras
given in \cite{haines2010satake}. We show that $\chi$ is trivial
on the Iwahori subgroup $\boldsymbol{T}(k)_{0}$ of $\boldsymbol{T}(k)$.
In Theorem-1, we calculate the image, under the local Langlands correspondence
for tori, of all such characters which are trivial on $\boldsymbol{T}(k)_{0}$
and show that it is precisely inflation of the cocycles in $H^{1}(W_{k}/I_{k},\hat{\boldsymbol{T}}^{I_{k}})$.
In Theorem-2, We show that the orbits in $H^{1}(W_{k}/I_{k},\hat{\boldsymbol{T}}^{I_{k}})$
of the relative Weyl group are in bijection with the semisimple conjugacy
classes in $\hat{\boldsymbol{G}}^{I}\rtimes(W_{k}/I_{k})$.

In \cite[Chapter 11]{MR2498783}, a character of $\boldsymbol{T}(k)$
is called elementary if under the Langlands reciprocity map, it corresponds
to a cocycle in $H^{1}(W_{k},\hat{\boldsymbol{T}})$ which is the
inflation of a cocyle in $H^{1}(W_{k}/I_{k},\hat{\boldsymbol{T}}^{I_{k}})$.
The question of characterizing the elementary characters is a natural
one and some partial results are presented in \cite[chapter 11]{MR2498783}.
Our first theorem answers this question by showing that a character
is elementary if and only if it is trivial on the Iwahori subgroup
$\boldsymbol{T}(k)_{0}$. 

The referee has pointed out to me that Lemma \ref{lem5} for arbitrary
tori was independently observed in a recent work by Haines \cite[Sec. 3.3.1]{haines}.
In that article, Haines mentions that the result is implied by a more
general result of Kaletha \cite[Prop. 4.5.2]{kaletha}. A characteristic
zero assumption is made in that result of Kaletha.

\section{Notations}

Let $k$ be a non-archimedian local field. Let $\boldsymbol{G}$ be
a connected reductive group defined $k$, which is quasi-split and
split over a tamely ramified extension. We denote $\boldsymbol{G}(k)$
by $G$ and likewise for other algebraic groups. Let $K$ be a special
maximal parahoric subgroup of $G$ corresponding to a special vertex
$\nu$ in the Bruhat-Tits building $\mathcal{B}(G_{ad})$. Let $\boldsymbol{A}$
denote a maximal split $k$-torus whose corresponding apartment in
$\mathcal{B}(G_{ad})$ contains $\nu$. Let $\boldsymbol{T}=Z_{\boldsymbol{G}}\boldsymbol{A}$,
the centralizer of $\boldsymbol{A}$ in $\boldsymbol{T}$. Then $\boldsymbol{T}$
is a maximal torus in $\boldsymbol{G}$ since $\boldsymbol{G}$ is
quasi-split. Let $W$ denote the relative Weyl group of $\boldsymbol{G}$.
Let $\mathscr{H}(G,K)$ be the Hecke algebra of $K$-bi-invariant
compactly supported complex-valued functions on $G$. Let $T_{c}$
and $T_{0}$ denote respectively the maximal compact subgroup and
the Iwahori subgroup of $T$. Let $\hat{\boldsymbol{G}}$ denote the
complex dual of $\boldsymbol{G}$ and $\hat{\boldsymbol{G}}_{ss}$
the set of semisimple elements in $\hat{\boldsymbol{G}}$. Let Inn$(\hat{\boldsymbol{G}})$
be the group of inner automorphisms of $\hat{\boldsymbol{G}}$. Let
$\sigma=\sigma_{k}$ denotes the Frobenius element in $W_{k}/I_{k},$
where $W_{k}$ is the Weil group of $k$ and $I=I_{k}$ is its inertia
subgroup. We denote the identity component of an algebraic group $\mathscr{G}$
by $\mathscr{G}^{\circ}$.

\section{Statement of the theorems}
\begin{thm}
A character is elementary if and only if it is trivial on the Iwahori
subgroup. In other words, we have a commutative diagram:

\[
\xymatrix{\mbox{Hom}(T,\mathbb{C}^{\times})\ar@{->}[r]_{\cong}^{LLC} & H^{1}(W_{k},\hat{\boldsymbol{T}})\\
\mbox{Hom}(T/T_{0},\mathbb{C^{\times}})\ar@{^{(}->}[u]\ar@{->}[r]^{\cong} & H^{1}(W_{k}/I,\hat{\boldsymbol{T}}^{I}),\ar@{^{(}->}^{infl}[u]
}
\]
 where $LLC$ is the local Langlands correspondence for tori and $infl$
is the inflation homomorphism. 
\end{thm}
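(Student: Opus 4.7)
The plan is to prove the diagram commutes and the bottom horizontal arrow is an isomorphism by showing that the LLC carries $\mathrm{Hom}(T/T_0,\mathbb{C}^\times)$ onto the image of the inflation map inside $H^1(W_k,\hat{\boldsymbol{T}})$. The argument has three components.

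First, both vertical maps in the diagram are injective: the left inclusion is tautological, and for the right inflation map, the inflation-restriction exact sequence
\[
0 \to H^1(W_k/I,\hat{\boldsymbol{T}}^I) \xrightarrow{\mathrm{infl}} H^1(W_k,\hat{\boldsymbol{T}}) \xrightarrow{\mathrm{res}} H^1(I,\hat{\boldsymbol{T}})^{W_k/I}
\]
delivers the injectivity. Thus it suffices to prove that the two subgroups of $H^1(W_k,\hat{\boldsymbol{T}})$ coincide.

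Second, I would invoke the Kottwitz homomorphism for the tamely ramified torus $\boldsymbol{T}$ to identify $T/T_0$ explicitly in terms of a Galois (co)invariant quotient of the cocharacter lattice $X_*(\boldsymbol{T})$, and dually identify $\mathrm{Hom}(T/T_0,\mathbb{C}^\times)$ as a natural subgroup of $\hat{\boldsymbol{T}}$ cut out by invariance under $I$ and $\sigma$. On the other side, $H^1(W_k/I,\hat{\boldsymbol{T}}^I)\cong \hat{\boldsymbol{T}}^I/(\sigma-1)\hat{\boldsymbol{T}}^I$ by a direct cocycle computation using that $W_k/I$ is topologically generated by $\sigma$.

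Third, the main technical step is to verify compatibility with the LLC. Because the LLC for tori is functorial in $\boldsymbol{T}$, my strategy is to reduce via Shapiro's lemma to the case of an induced torus $\boldsymbol{T}=\mathrm{Res}_{L/k}\mathbb{G}_m$ for some finite tame extension $L/k$. For such a torus, $H^1(W_k,\hat{\boldsymbol{T}})\cong H^1(W_L,\mathbb{C}^\times)=\mathrm{Hom}(W_L,\mathbb{C}^\times)$, $T(k)=L^\times$, and $T_0=\mathcal{O}_L^\times$, so the claim reduces to the classical assertion from local class field theory that the reciprocity isomorphism $L^\times \cong W_L^{\mathrm{ab}}$ carries $\mathcal{O}_L^\times$ onto $I_L^{\mathrm{ab}}$. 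To pass from induced tori to an arbitrary tamely ramified $\boldsymbol{T}$, one resolves $\boldsymbol{T}$ by induced tori in the sense of Ono and propagates the identity by a diagram chase, the tameness hypothesis ensuring that the constructions $(-)/T_0$ and $\hat{\boldsymbol{T}}^I$ interact well with the short exact sequences involved. This devissage is the most delicate part of the argument and is where I expect the main difficulty to lie.
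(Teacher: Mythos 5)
Your plan rightly identifies the base case (induced tori, Shapiro's lemma, local class field theory) and the need to propagate to general tori by functoriality of the LLC, which is the same broad route as the paper. The real action, as you anticipate, is in the devissage, and there your proposal diverges from the paper in a way that matters.

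For the base case, note one point you gloss over: it is not enough to observe that the reciprocity isomorphism carries $\mathcal{O}_L^\times$ onto $I_L^{\mathrm{ab}}$. To run the functoriality argument cleanly one should show that the isomorphism $\kappa_{\boldsymbol{T}}\colon \mathrm{Hom}(T/T_0,\mathbb{C}^\times)\to H^1(W_k/I,\hat{\boldsymbol{T}}^I)$ coming from the Kottwitz homomorphism literally agrees with the one induced by the LLC, not merely that their images in $H^1(W_k,\hat{\boldsymbol{T}})$ coincide. The paper does this in Lemma~\ref{lem5} by an explicit cocycle computation: choosing a $W_k$-stable basis of $X^*(\boldsymbol{T})$ and checking that both maps send a character $f$ with $f(1)=c$ to the cocycle $\sigma\mapsto\chi\otimes c$, where $\chi$ is the sum of an $I$-orbit of basis elements.

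The more serious issue is the passage from induced tori to general tori. You propose an Ono-type resolution $1\to T_1\to T_2\to T\to 1$ with $T_1,T_2$ induced and a diagram chase. The functors $T\mapsto T/T_0$, $T\mapsto\hat{\boldsymbol{T}}^I$ and $T\mapsto(\hat{\boldsymbol{T}}^I)_\sigma$ are not exact, so the long exact sequences arising from such a resolution carry error terms that your sketch does not address; and the tameness hypothesis you invoke is a red herring here (none of Lemma~\ref{main lemma}, Lemma~\ref{lem5}, nor the subsequent Proposition uses tameness of $\boldsymbol{T}$). The paper sidesteps this entirely with a sharper device: it proves that there is a \emph{unique} additive functorial family of homomorphisms $\mathrm{Hom}(T/T_0,\mathbb{C}^\times)\to H^1(W_k,\hat{\boldsymbol{T}})$ agreeing with the LLC on $R_{k'/k}\mathbb{G}_m$. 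Uniqueness is established not by a resolution but by a single embedding $\boldsymbol{T}\hookrightarrow\boldsymbol{T}'=R_{k'/k}(\boldsymbol{T}\otimes_k k')$ into an induced torus, for which $T/T_0\hookrightarrow T'/T_0'$, and then the divisibility of $\mathbb{C}^\times$ gives surjectivity of $\mathrm{Hom}(T'/T_0',\mathbb{C}^\times)\to\mathrm{Hom}(T/T_0,\mathbb{C}^\times)$; hence every character of $T/T_0$ lifts to $T'/T_0'$ where the map is already pinned down. Since both $\kappa_{\boldsymbol{T}}$ composed with inflation and the restriction of the LLC satisfy the two characterizing conditions, they coincide, which is exactly the commutativity of the diagram. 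I would replace your Ono-resolution diagram chase with this uniqueness argument; it is both simpler and actually closes the gap. A small additional slip: by Kottwitz and Cartier duality $\mathrm{Hom}(T/T_0,\mathbb{C}^\times)$ is identified with the coinvariant quotient $(\hat{\boldsymbol{T}}^I)_\sigma$, not with a subgroup of $\hat{\boldsymbol{T}}$ as your step two suggests.
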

Let Rep$(G)$ denote the set of equivalent classes of smooth, irreducible
admissible representations of $G.$ 
\begin{thm}
\label{main th}The $K$-spherical representations are in a natural
bijection with the semisimple conjugacy classes in $\hat{\boldsymbol{G}}^{I}\rtimes\sigma$
via the local Langlands correspondence for tori. More precisely, 
\begin{eqnarray*}
\{\pi|\pi\in\mbox{Rep}(G),\mbox{ \ensuremath{\pi^{K}\neq0\}}} & \leftrightarrow & \mbox{Hom}_{\mathbb{C}}(\mathscr{H}(G,K),\mathbb{C})\\
 & \cong & \mbox{Hom}(T/T_{0},\mathbb{C}^{\times})/W\\
 & \cong & (\hat{\boldsymbol{T}}^{I})_{\sigma}/W\\
 & \cong & (\hat{\boldsymbol{G}}^{I}\rtimes\sigma)_{ss}/\mbox{Inn}(\hat{\boldsymbol{G}}^{I}).
\end{eqnarray*}
 
\end{thm}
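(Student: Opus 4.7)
The theorem asserts four successive bijections, and I would establish each in turn. The first is the standard correspondence between irreducible $K$-spherical representations and characters of the Hecke algebra, which holds whenever $\mathscr{H}(G,K)$ is commutative; for special maximal parahoric $K$, this commutativity is established in \cite{haines2010satake}, and then each irreducible $K$-spherical $\pi$ has $\pi^K$ one-dimensional with $\mathscr{H}(G,K)$ acting through a character. The second bijection follows from the Satake isomorphism for special maximal parahorics, also from \cite{haines2010satake}, which identifies $\mathscr{H}(G,K)$ with the $W$-invariants in the group algebra $\mathbb{C}[T/T_{0}]$; characters of such a $W$-invariant subalgebra are precisely $W$-orbits on $\mathrm{Hom}(T/T_{0},\mathbb{C}^{\times})$.

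For the third bijection I would combine Theorem 1 with the cyclic structure of $W_{k}/I$. Theorem 1 identifies $\mathrm{Hom}(T/T_{0},\mathbb{C}^{\times})$ with $H^{1}(W_{k}/I,\hat{\boldsymbol{T}}^{I})$; since $W_{k}/I$ is generated by $\sigma$, a cocycle is determined by its value at $\sigma$ and coboundaries correspond to elements of the form $(\sigma-1)t$, yielding $H^{1}(W_{k}/I,\hat{\boldsymbol{T}}^{I}) \cong (\hat{\boldsymbol{T}}^{I})_{\sigma}$. Naturality of both the Satake isomorphism and the local Langlands correspondence for tori ensures that the $W$-actions match under this chain of identifications, so the $W$-orbits correspond correctly.

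The fourth bijection, which identifies $W$-orbits on $(\hat{\boldsymbol{T}}^{I})_{\sigma}$ with $\sigma$-twisted semisimple conjugacy classes in $\hat{\boldsymbol{G}}^{I}$ modulo $\mathrm{Inn}(\hat{\boldsymbol{G}}^{I})$, is where I expect the main technical difficulty to lie. This is a Steinberg-type statement, but Steinberg's classical theorem does not apply directly because $\hat{\boldsymbol{G}}^{I}$ need not be connected. The strategy is to reduce to the identity component: tameness implies $I$ acts on $\hat{\boldsymbol{G}}$ through a finite cyclic quotient, so $(\hat{\boldsymbol{G}}^{I})^{\circ}$ is a connected reductive group with maximal torus $(\hat{\boldsymbol{T}}^{I})^{\circ}$ and Weyl group naturally identified with $W$. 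One then applies a Steinberg-type theorem to $(\hat{\boldsymbol{G}}^{I})^{\circ} \rtimes \sigma$ and tracks the effect of the component group of $\hat{\boldsymbol{G}}^{I}$ on the correspondence. Carefully verifying that the resulting bijection is compatible with the earlier identifications — in particular that passing to $(\hat{\boldsymbol{T}}^{I})^{\circ}_{\sigma}$ versus $(\hat{\boldsymbol{T}}^{I})_{\sigma}$ does not distort the $W$-orbit structure — is the main piece of real work in the proof.
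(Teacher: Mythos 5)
Your plan matches the paper's proof step for step: the first bijection via commutativity of $\mathscr{H}(G,K)$ and spherical functions, the second via the Haines–Rostami Satake isomorphism $\mathscr{H}(G,K)\cong\mathbb{C}[T/T_0]^W$, the third via the Kottwitz isomorphism $T/T_0\cong X^*((\hat{\boldsymbol{T}}^I)_\sigma)$ (equivalently Theorem~1 together with the cyclicity of $W_k/I$), and the fourth via a Steinberg-type argument for disconnected groups. You have also correctly located where the real content is.

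Where your sketch is too vague, however, is precisely at the point you flag as "the main piece of real work": you say one should "track the effect of the component group of $\hat{\boldsymbol{G}}^{I}$" and worry about passing between $(\hat{\boldsymbol{T}}^{I})^{\circ}_{\sigma}$ and $(\hat{\boldsymbol{T}}^{I})_{\sigma}$, but you do not identify the specific structural fact that makes this tractable. The paper's key input is the Kottwitz–Shelstad theorem: because $\boldsymbol{G}$ is tame and the inertia action preserves a pinning, one has $\hat{\boldsymbol{G}}^{I}=\hat{\boldsymbol{Z}}^{I}(\hat{\boldsymbol{G}}^{I})^{\circ}$ and $\hat{\boldsymbol{T}}^{I}=\hat{\boldsymbol{Z}}^{I}(\hat{\boldsymbol{T}}^{I})^{\circ}$, i.e.\ all the disconnectedness of $\hat{\boldsymbol{G}}^{I}$ is explained by the fixed center, and the same center explains the disconnectedness of $\hat{\boldsymbol{T}}^{I}$. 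This is what lets you extend the surjection $(\hat{\boldsymbol{T}}^{I})^{\circ}\twoheadrightarrow((\hat{\boldsymbol{G}}^{I})^{\circ}\rtimes\sigma)_{ss}/\mathrm{Inn}$ (a Steinberg/Borel-type statement for the connected group) to all of $\hat{\boldsymbol{T}}^{I}$ and $\hat{\boldsymbol{G}}^{I}$ simultaneously by twisting by central elements, with no mismatch in the component groups. Without this fact your reduction has an unresolved gap.

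A second point you pass over: a Steinberg-type theorem gives surjectivity of $\hat{\boldsymbol{T}}^{I}\to(\hat{\boldsymbol{G}}^{I}\rtimes\sigma)_{ss}/\mathrm{Inn}$, but the claim that the fibers are exactly the $W^{\tau,\sigma}$-twisted-conjugacy orbits (injectivity on orbits) is a separate argument. The paper proves it by a Bruhat-decomposition computation: if $g^{-1}sg^{\sigma}=t$ with $s,t\in\hat{\boldsymbol{T}}^{I}$ and $g\in\hat{\boldsymbol{G}}^{I}$, writing $g=unv$ forces $sn^{\sigma}=nt$, and one then uses a $\sigma$-stable lift of the resulting Weyl element (Borel's Lemma~6.2, or Zhu's Lemma~4.7) together with the central decomposition above to conclude that $s$ and $t$ are $W$-twisted conjugate via $\hat{\boldsymbol{T}}^{I}$. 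You should either cite such a lemma or carry out this computation; it is not automatic from "a Steinberg-type theorem."

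Finally, a small but necessary sanity check you should make explicit: the $W$ appearing in $\mathrm{Hom}(T/T_0,\mathbb{C}^\times)/W$ is the relative Weyl group of $\boldsymbol{G}$ over $k$, and on the dual side it gets identified with $\boldsymbol{W}^{\tau,\sigma}$, the $\sigma$-fixed points of the Weyl group of $(\hat{\boldsymbol{G}}^{I})^{\circ}$; this identification (item~(3) of the Kottwitz–Shelstad theorem) is needed for the chain of bijections to be equivariant, which you gesture at but do not justify.
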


\section{Langlands correspondence for tori}

The following treatment of Local Langlands correspondence for tori
can be found in \cite{MR2508725}.

\subsection{The special case of an induced torus. \label{induced torus}}

Let $\boldsymbol{T}=R_{k^{\prime}/k}\mathbb{G}_{m}$ be an induced
torus, when $k^{\prime}$ is a finite separable extension of $k$.
Then $T=k^{\prime\times}$ and the group of characters $X^{*}(\boldsymbol{T})$
is canonically a free $\mathbb{Z}$-module with basis $W_{k}/W_{k^{\prime}}$.
From this, it follows that $\hat{\boldsymbol{T}}$ is canonically
isomorphic to $ind_{W_{k^{\prime}}}^{W_{k}}\mathbb{C}^{\times}$.
We get, 
\begin{eqnarray}
\mbox{Hom}(T,\mathbb{C^{\times}}) & \cong & \mbox{Hom}(k^{\prime\times},\mathbb{C}^{\times})\nonumber \\
 & \cong & \mbox{Hom}(W_{k^{\prime}},\mathbb{C}^{\times})\label{class field}\\
 & \cong & \mbox{H}^{1}(W_{k^{\prime}},\mathbb{C}^{\times})\nonumber \\
 & \cong & \mbox{H}^{1}(W_{k},ind_{W_{k^{\prime}}}^{W_{k}}\mathbb{C}^{\times})\label{shapiro}\\
 & \cong & \mbox{H}^{1}(W_{k},\hat{\boldsymbol{T})}.\nonumber 
\end{eqnarray}
 The isomorphism \ref{class field} follows by class field theory
and the isomorphism \ref{shapiro} by Shapiro's lemma.

\subsection{The LLC for tori in general}
\begin{thm*}
\cite[7.5 Theorem]{MR2508725}There is a unique family of homomorphisms
\[
\varphi_{\boldsymbol{T}}:Hom(T,\mathbb{C}^{\times})\rightarrow H^{1}(W_{k},\hat{\boldsymbol{T}})
\]
with the following properties:
\begin{enumerate}
\item $\varphi_{\boldsymbol{T}}$ is additive functorial in $\boldsymbol{T}$,
i.e., it is a morphism between two additive functors from the category
of tori over $k$ to the category of abelian groups;
\item For $\boldsymbol{T}=R_{k^{\prime}/k}\mathbb{G}_{m}$, where $k^{\prime}/k$
is a finite separable extention, $\varphi_{\boldsymbol{T}}$ is the
isomorphism described in Section \ref{induced torus}. 
\end{enumerate}
\end{thm*}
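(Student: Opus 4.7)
The plan is to reduce to the case of induced tori, where the family is prescribed by property (2), via a two-step resolution of an arbitrary torus. I would begin by invoking the standard fact that every $k$-torus $\boldsymbol{T}$ fits into a short exact sequence
\[
1 \to \boldsymbol{T} \to \boldsymbol{T}_{1} \to \boldsymbol{T}_{2} \to 1
\]
in which $\boldsymbol{T}_{1},\boldsymbol{T}_{2}$ are finite products of induced tori $R_{k'/k}\mathbb{G}_{m}$; this is the torus-theoretic translation of the existence of a two-step permutation resolution of the finitely generated $\mathrm{Gal}(\bar{k}/k)$-lattice $X^{*}(\boldsymbol{T})$. Applying $\mathrm{Hom}(\cdot,\mathbb{C}^{\times})$ on the automorphic side and the long exact cohomology of the dual sequence $0 \to \hat{\boldsymbol{T}}_{2} \to \hat{\boldsymbol{T}}_{1} \to \hat{\boldsymbol{T}} \to 0$ on the Galois side will produce a commutative diagram whose first two vertical arrows are the already-given isomorphisms $\varphi_{\boldsymbol{T}_{1}},\varphi_{\boldsymbol{T}_{2}}$; the top row will be exact by injectivity of $\mathbb{C}^{\times}$ as a $\mathbb{Z}$-module, and the bottom row a complex coming from the long exact sequence.

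For uniqueness I would note that any $\chi\in\mathrm{Hom}(T,\mathbb{C}^{\times})$ lifts to some $\chi_{1}\in\mathrm{Hom}(T_{1},\mathbb{C}^{\times})$ by exactness of the top row, and functoriality then forces $\varphi_{\boldsymbol{T}}(\chi)$ to equal the image in $H^{1}(W_{k},\hat{\boldsymbol{T}})$ of the already-determined $\varphi_{\boldsymbol{T}_{1}}(\chi_{1})$; so two families satisfying (1) and (2) must coincide on $\boldsymbol{T}$.

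For existence I would define $\varphi_{\boldsymbol{T}}(\chi)$ by that same recipe. Well-definedness will reduce to the statement that if $\chi_{1}$ restricts to $0$ on $T$, then $\varphi_{\boldsymbol{T}_{1}}(\chi_{1})$ maps to $0$ in $H^{1}(W_{k},\hat{\boldsymbol{T}})$: exactness of the top row produces $\chi_{2}\in\mathrm{Hom}(T_{2},\mathbb{C}^{\times})$ pulling back to $\chi_{1}$, and functoriality on induced tori then identifies $\varphi_{\boldsymbol{T}_{1}}(\chi_{1})$ with the image of $\varphi_{\boldsymbol{T}_{2}}(\chi_{2})$ in $H^{1}(W_{k},\hat{\boldsymbol{T}}_{1})$, which is annihilated by the next map in the long exact sequence. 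Three further checks will remain: independence of the chosen resolution (any two resolutions admit a common refinement constructed as a pullback of tori), additivity (a direct sum of resolutions is a resolution), and functoriality in $\boldsymbol{T}$ (a morphism $\boldsymbol{T}\to\boldsymbol{T}'$ lifts to a morphism of chosen resolutions because permutation lattices are projective with respect to surjections of $\mathrm{Gal}$-lattices).

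The hardest step will be the interlocked verification of these last three properties, since a clean proof of independence of the resolution uses functoriality at the level of resolution maps, while functoriality itself is proved via a resolution argument. The cleanest organizing principle I would adopt is to view the whole construction as producing a natural transformation of additive functors on the category of $k$-tori that is uniquely determined by its prescribed value on the full subcategory of induced tori; any compatible extension to the whole category must then agree with the one just defined by the diagram above, which will settle independence, additivity, and functoriality simultaneously.
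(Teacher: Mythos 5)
The paper does not prove this theorem; it is quoted verbatim from \cite[7.5~Theorem]{MR2508725}. The closest thing the paper does prove is the later Proposition establishing a unique functorial family of maps $\mathrm{Hom}(T/T_{0},\mathbb{C}^{\times})\to H^{1}(W_{k},\hat{\boldsymbol{T}})$, and that proof takes a visibly different route from yours: for \emph{existence} it does not resolve $\boldsymbol{T}$ at all but simply exhibits a concrete functorial map (the Kottwitz isomorphism followed by inflation); for \emph{uniqueness} it uses a single embedding $\boldsymbol{T}\hookrightarrow\boldsymbol{T}'=R_{k'/k}(\boldsymbol{T}\otimes_{k}k')$ into a product of induced tori, together with surjectivity of the restriction map on characters. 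That one-step embedding is essentially identical to your uniqueness step, so on that point the two arguments agree.

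Where your sketch is on shakier ground is the very first sentence, which you present as a ``standard fact'': that every $k$-torus $\boldsymbol{T}$ sits in a short exact sequence $1\to\boldsymbol{T}\to\boldsymbol{T}_{1}\to\boldsymbol{T}_{2}\to 1$ with \emph{both} $\boldsymbol{T}_{1}$ and $\boldsymbol{T}_{2}$ products of tori $R_{k'/k}\mathbb{G}_{m}$. Translated to character lattices, this asks for a two-term resolution $0\to X^{*}(\boldsymbol{T}_{2})\to X^{*}(\boldsymbol{T}_{1})\to X^{*}(\boldsymbol{T})\to 0$ in which both outer terms are permutation Galois lattices. The fact that is actually standard (Colliot-Th\'el\`ene--Sansuc) is the existence of a \emph{coflasque} or \emph{flasque} resolution, where only one term is required to be permutation; insisting that the other term be permutation as well is a genuinely stronger condition and does not hold for an arbitrary Galois lattice. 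Your existence and well-definedness argument leans on $\boldsymbol{T}_{2}$ being induced precisely so that $\varphi_{\boldsymbol{T}_{2}}$ is already prescribed by property (2) and functoriality can be applied to $\boldsymbol{T}_{1}\to\boldsymbol{T}_{2}$; without that, the well-definedness check becomes circular. So you either need to justify the two-term permutation resolution in the cases you care about, or restructure the argument (as the paper's later Proposition does, and as Yu's proof in \cite{MR2508725} does) so that only one auxiliary induced torus is needed and the quotient plays no role. The honest acknowledgement at the end of your sketch, that independence of resolution, additivity, and functoriality are interlocked and need simultaneous treatment, is accurate, but it is downstream of the unsupported premise and would be moot if that premise fails.
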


\section{Proof of theorem 1}
\begin{lem}
\label{main lemma}Let $\boldsymbol{T}$ be a tous defined over $k$.
Then there exists an isomorphism
\[
\kappa_{\boldsymbol{T}}:\mbox{Hom}(T/T_{0},\mathbb{C}^{\times})\rightarrow\mbox{H}^{1}(W_{k}/I,\hat{\boldsymbol{T}}^{I}).
\]
 Moreover, the isomorphism $k_{\boldsymbol{T}}$ is additive functorial
in $\boldsymbol{T}.$ \end{lem}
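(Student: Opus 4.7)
The approach is to mirror the proof of the LLC for tori recalled in Section~\ref{induced torus}: first construct $\kappa_{\boldsymbol{T}}$ directly for induced tori $\boldsymbol{T} = R_{k'/k}\mathbb{G}_m$, and then propagate it to all $k$-tori by additive functoriality via a resolution by induced tori.

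For the induced case, the chain of isomorphisms parallels (\ref{class field})--(\ref{shapiro}). Since $T = k'^{\times}$ and the Iwahori subgroup is $T_0 = \mathcal{O}_{k'}^{\times}$, the domain $\mbox{Hom}(T/T_0, \mathbb{C}^{\times})$ is precisely the group of characters of $k'^{\times}$ trivial on units; local class field theory identifies this with $\mbox{Hom}(W_{k'}/I_{k'}, \mathbb{C}^{\times}) = H^{1}(W_{k'}/I_{k'}, \mathbb{C}^{\times})$. To match the target, I would use $\hat{\boldsymbol{T}} = ind_{W_{k'}}^{W_k}\mathbb{C}^{\times}$ and compute the $I$-fixed part via the double-coset decomposition $I \backslash W_k / W_{k'}$; a Shapiro-type argument then identifies $H^{1}(W_k/I, \hat{\boldsymbol{T}}^I)$ with $H^{1}(W_{k'}/I_{k'}, \mathbb{C}^{\times})$. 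Each step is manifestly functorial under morphisms of induced tori, so the resulting $\kappa$ is additive functorial on this subcategory.

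For a general torus $\boldsymbol{T}$, I would fix a short exact sequence $1 \to \boldsymbol{T}' \to \boldsymbol{S} \to \boldsymbol{T} \to 1$ with $\boldsymbol{S}$ (and hence $\boldsymbol{T}'$) induced. Both functors $\boldsymbol{T} \mapsto \mbox{Hom}(T/T_0, \mathbb{C}^{\times})$ and $\boldsymbol{T} \mapsto H^{1}(W_k/I, \hat{\boldsymbol{T}}^I)$ sit in long exact sequences; comparing these via the isomorphisms already constructed on the two induced terms and invoking the five-lemma produces $\kappa_{\boldsymbol{T}}$ for $\boldsymbol{T}$ and simultaneously records its additive functoriality, thereby settling both halves of the statement at once.

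The chief technical obstacle is supplying the exactness inputs for this diagram-chase. On the group side one must control how $T \mapsto T/T_0$ interacts with short exact sequences --- equivalently, the behaviour of the Kottwitz homomorphism $T \to X_{*}(\boldsymbol{T})_I$ --- which is not automatic since $T_0$ is defined via Bruhat--Tits theory rather than as the kernel of a manifestly exact map. On the Galois side the inflation--restriction sequence, combined with tameness-based vanishing on $\hat{\boldsymbol{T}}$, must be assembled into the corresponding long exact sequence of the $H^{1}(W_k/I, -^I)$ terms. Matching these two columns across the comparison map is the delicate step, and the tameness hypothesis is precisely what keeps both sides tractable.
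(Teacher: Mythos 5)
Your approach is genuinely different from the paper's, and considerably harder. The paper's proof is a two-line application of the Kottwitz isomorphism $T/T_{0}\cong(X^{*}(\hat{\boldsymbol{T}})_{I})^{\sigma}\cong X^{*}((\hat{\boldsymbol{T}}^{I})_{\sigma})$ (cited from \cite{MR1485921}, \cite{haines2010satake}), followed by Cartier duality and the identification $(\hat{\boldsymbol{T}}^{I})_{\sigma}\cong H^{1}(W_{k}/I,\hat{\boldsymbol{T}}^{I})$; functoriality then comes for free from functoriality of the Kottwitz map. You instead propose to build $\kappa_{\boldsymbol{T}}$ from scratch: directly on induced tori via class field theory and Shapiro, and then on all tori by a resolution-and-five-lemma argument. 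That route is essentially a re-derivation of the Kottwitz isomorphism itself, which the paper deliberately treats as a black box. (Note also that what you are constructing for induced tori is really the map that the paper introduces only later, in Lemma~\ref{lem5}, to compare $\kappa_{\boldsymbol{T}}$ with LLC; Lemma~\ref{main lemma} as stated asks only for existence and additive functoriality, not LLC-compatibility.)

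There is also a concrete gap in the resolution step: you write ``fix $1\to\boldsymbol{T}'\to\boldsymbol{S}\to\boldsymbol{T}\to1$ with $\boldsymbol{S}$ (and hence $\boldsymbol{T}'$) induced,'' but the kernel of a surjection from an induced torus onto an arbitrary torus is \emph{not} in general induced --- equivalently, a quotient of $X^{*}(\boldsymbol{S})$ by the submodule $X^{*}(\boldsymbol{T})$ need not be a permutation Galois module. You would need either a longer resolution by induced tori or a flasque-resolution argument, and then separately establish that both of your functors send such resolutions to exact (or compatibly long-exact) sequences --- exactly the exactness inputs you flag as ``the chief technical obstacle'' without supplying. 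On the group side this amounts to proving exactness properties of the Kottwitz homomorphism under short exact sequences of tori, which is nontrivial (and is, in effect, how Kottwitz proves his isomorphism). So your sketch is not wrong in spirit, but as written it both contains a false step and defers the real work; the paper avoids all of it by invoking the Kottwitz isomorphism for arbitrary tori directly.
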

\begin{proof}
We have by Kottwitz isomorphism \cite[sec 7]{MR1485921}, (see also
\cite[Prop 1.0.2]{haines2010satake}) 
\begin{eqnarray}
T/T_{0} & \cong & ((X^{*}(\hat{\boldsymbol{T}}))_{I})^{\sigma}\nonumber \\
 & \cong & X^{*}((\hat{\boldsymbol{T}}^{I})_{\sigma}).\label{kottwitz}
\end{eqnarray}

Therefore,
\begin{eqnarray}
\mbox{Hom}(T/T_{0},\mathbb{C}^{\times}) & \cong & \mbox{Hom}(X^{*}((\hat{\boldsymbol{T}}^{I})_{\sigma}),\mathbb{C^{\times}})\nonumber \\
 & \cong & (\hat{\boldsymbol{T}}^{I})_{\sigma}\label{cartier duality}\\
 & \cong & \mbox{H}^{1}(W_{k}/I,\hat{\boldsymbol{T}}^{I}).\nonumber 
\end{eqnarray}
\label{seq of isom}

The isomorphism in Equation \ref{cartier duality} is by Cartier duality.
The functoriality of $\kappa_{\boldsymbol{T}}$ follows from the functoriality
of the Kottwitz isomorphism. \end{proof}
\begin{rem}
From the relation 
\[
T/T_{c}\cong(X^{*}(\hat{\boldsymbol{T}})_{I})^{\sigma}/\mbox{torsion},
\]
 one similarly obtains the isomorphism
\begin{equation}
\mbox{Hom}(T/T_{c},\mathbb{C}^{\times})\cong\mbox{H}^{1}(W_{k}/I,(\hat{\boldsymbol{T}}^{I})^{\circ}).
\end{equation}
\end{rem}
\begin{lem}
\label{lem5}Let $k^{\prime}/k$ be a finite separable extension and
put $\boldsymbol{T}=R_{k^{\prime}/k}(\mathbb{G}_{m})$. Then the isomorphism
$\mbox{Hom}(T/T_{0},\mathbb{C}^{\times})\cong\mbox{H}^{1}(W_{k}/I,\hat{\boldsymbol{T}}^{I})$
obtained from the Kottwitz isomorphism in Lemma \ref{main lemma}
is the same as the one induced from the Local Langlands correspondence
for tori. \end{lem}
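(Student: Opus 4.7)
My plan is to verify the compatibility by unpacking both isomorphisms explicitly on the induced torus $\boldsymbol{T}=R_{k'/k}\mathbb{G}_{m}$ and reducing the comparison to the tautological fact that both routes end in the evaluation map $\chi\mapsto\chi(\pi_{k'})\in\mathbb{C}^{\times}$. Since for an induced torus everything in sight becomes a copy of $\mathbb{C}^{\times}$ after the choice of a uniformizer $\pi_{k'}$, the content of the lemma is the commutativity of a single square of copies of $\mathbb{C}^{\times}$.

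First I would make the Kottwitz side concrete. Here $X_{*}(\boldsymbol{T})=\mathbb{Z}[W_{k}/W_{k'}]$ as a $W_{k}$-module, so the $I_{k}$-coinvariants are the free $\mathbb{Z}$-module on the double cosets $I_{k}\backslash W_{k}/W_{k'}$, which are in canonical bijection with $\mbox{Gal}(\ell/k)$, where $\ell=k'\cap k^{un}$ is the maximal unramified subextension of $k'/k$. Taking $\sigma$-invariants collapses this permutation module to $\mathbb{Z}$, and the Kottwitz homomorphism $T=k'^{\times}\to\mathbb{Z}$ is then the normalized valuation $v_{k'}$; in particular $T_{0}=\mathcal{O}_{k'}^{\times}$ and $\mbox{Hom}(T/T_{0},\mathbb{C}^{\times})$ is the group of unramified characters of $k'^{\times}$. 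Dually, $\hat{\boldsymbol{T}}^{I_{k}}\cong(\mathbb{C}^{\times})^{\mbox{Gal}(\ell/k)}$ with $\sigma$ acting by coordinate permutation; its $\sigma$-coinvariants are $\mathbb{C}^{\times}$ via the product map, and $H^{1}(W_{k}/I_{k},\hat{\boldsymbol{T}}^{I_{k}})\cong(\hat{\boldsymbol{T}}^{I_{k}})_{\sigma}$ by the usual identification of $H^{1}(\mathbb{Z},-)$ with coinvariants. Threading $\chi$ through Lemma~\ref{main lemma} then shows that $\kappa_{\boldsymbol{T}}(\chi)\in\mathbb{C}^{\times}$ equals $\chi(\pi_{k'})$.

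Next I would run the same unramified $\chi$ through the LLC route of Section~\ref{induced torus}. Local class field theory sends $\chi$ to an unramified character $\tilde{\chi}:W_{k'}\to\mathbb{C}^{\times}$, i.e.\ a class in $H^{1}(W_{k'}/I_{k'},\mathbb{C}^{\times})$. Its Shapiro image in $H^{1}(W_{k},\hat{\boldsymbol{T}})$ is represented, with respect to any set-theoretic section of $W_{k}\to W_{k}/W_{k'}$, by a cocycle $W_{k}\to\hat{\boldsymbol{T}}$ that is manifestly trivial on $I_{k}$ and valued in $\hat{\boldsymbol{T}}^{I_{k}}$; hence it lies in $\mbox{infl}(H^{1}(W_{k}/I_{k},\hat{\boldsymbol{T}}^{I_{k}}))$. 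Evaluating the explicit Shapiro cocycle at $\sigma$ and passing to the identification $(\hat{\boldsymbol{T}}^{I_{k}})_{\sigma}\cong\mathbb{C}^{\times}$ fixed in the previous step yields the same element $\chi(\pi_{k'})$, proving the agreement.

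The principal obstacle is sign and Frobenius bookkeeping: one must check that the normalizations of the Kottwitz map, of the reciprocity map (arithmetic versus geometric Frobenius), and of the Shapiro isomorphism on $H^{1}$ are chosen consistently, so that both paths yield $\chi\mapsto\chi(\pi_{k'})$ rather than its inverse or a twist by $[\ell:k]$. Once those normalizations are pinned down, the comparison is a commutative diagram of copies of $\mathbb{C}^{\times}$ and the lemma follows.
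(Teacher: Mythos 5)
Your proposal is correct and follows essentially the same route as the paper's proof: unwind the Kottwitz isomorphism explicitly on the induced torus, unwind the LLC route through class field theory and Shapiro's lemma (noting Shapiro is compatible with inflation from the unramified quotient), and observe that both sides reduce to the same evaluation of $\chi$ on a uniformizer of $k'$. The paper phrases the common image as the cocycle $\sigma\mapsto\chi\otimes c$ with $\chi$ a sum over an $I_{k}$-orbit in a $W_{k}$-stable basis of the character lattice, while you phrase it as $\chi(\pi_{k'})\in\mathbb{C}^{\times}$ after trivializing both sides; these are the same computation, and like the paper you leave the final Frobenius-normalization bookkeeping compressed rather than fully spelled out.
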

\begin{proof}
Since $\boldsymbol{T}$ is an induced torus, $X^{*}(\boldsymbol{T})$
is canonically a free $\mathbb{Z}$-module with basis $W_{k}/W_{k^{\prime}}$.
Consequently, $\hat{\boldsymbol{T}}$ is simply $ind_{W_{k^{\prime}}}^{W_{k}}\mathbb{C}^{\times}$.
Let $\mathscr{O}$ and $\mathscr{O}^{\prime}$ be the ring of integers
in $k$ and $k^{\prime}$ respectively. We have, 
\begin{eqnarray}
H^{1}(W_{k}/I_{k},\hat{\boldsymbol{T}}^{I_{k}}) & \cong & (\hat{\boldsymbol{T}}^{I_{k}})_{\sigma_{k}}\nonumber \\
 & \cong & ((ind_{W_{k^{\prime}}}^{W_{k}}\mathbb{C}^{\times})^{I_{k}})_{\sigma_{k}}\nonumber \\
 & \cong & \mbox{H}^{1}(W_{k}/I_{k},(ind_{W_{k^{\prime}}}^{W_{k}}\mathbb{C}^{\times})^{I_{k}})\nonumber \\
 & \cong & \mbox{H}^{1}(W_{k^{\prime}}/I_{k^{\prime}},\mathbb{C}^{\times})\label{shapiro-1}\\
 & \cong & \mbox{Hom}(k^{\prime\times}/\mathscr{O}^{\prime\times},\mathbb{C}^{\times})\label{class-field}\\
 & \cong & \mbox{Hom}(T/T_{0},\mathbb{C}^{\times}).\nonumber 
\end{eqnarray}
Here, the isomorphism in \ref{shapiro-1} is induced by the isomorphism
in Shapiro's lemma:
\[
\xymatrix{\mbox{H}^{1}(W_{k},\mbox{ind}{}_{W_{k^{\prime}}}^{W_{k}}\mathbb{C}^{\times})\ar@{->}[r]^{\sim} & \mbox{H}^{1}(W_{k^{\prime}},\mathbb{C}^{\times})\\
\mbox{H}^{1}(W_{k}/I_{k},(\mbox{ind}{}_{W_{k^{\prime}}}^{W_{k}}\mathbb{C}^{\times})^{I_{k}})\ar@{->}[r]^{\sim}\ar@{^{(}->}[u]^{infl} & \mbox{H}^{1}(W_{k^{\prime}}/I_{k^{\prime}},\mathbb{C}^{\times})\ar@{^{(}->}[u]^{infl}.
}
\]
 Thus the Local Langlands Correspondence also induces an isomorphism: 

\[
\varphi_{\boldsymbol{T}}:\mbox{Hom}(T/T_{0},\mathbb{C}^{\times})\cong\mbox{H}^{1}(W_{k}/I,\hat{\boldsymbol{T}}^{I}).
\]

The Kottwitz isomorphism for induced tori is constructed as follows
(see \cite[Sec. 7.2]{MR1485921}): the homomorphism 
\[
v_{T}^{\prime}:T\rightarrow\mbox{Hom}(X^{*}(\boldsymbol{T}),\mathbb{Z}),
\]
 sending $t\in T$ to the homomorphism 
\[
\lambda\mapsto\mbox{ord}(\lambda(t)),
\]
 induces an isomorphism 
\begin{equation}
v_{T}:T/T_{0}\cong\mbox{Hom}(X^{*}(\boldsymbol{T})^{I},\mathbb{Z})^{\sigma}.\label{kott hom-1}
\end{equation}
 Also, the homomorphism 
\[
q_{T}^{\prime}:X_{*}(\boldsymbol{T})\rightarrow\mbox{Hom}(X^{*}(\boldsymbol{T}),\mathbb{Z}),
\]
 which sends $\mu\in X_{*}(T)$ to the homomorphism 
\[
\lambda\mapsto\langle\lambda,\mu\rangle,
\]
 induces an isomorphism 
\begin{equation}
q_{T}:(X_{*}(\boldsymbol{T})_{I})^{\sigma}\cong\mbox{Hom}(X^{*}(\boldsymbol{T})^{I},\mathbb{Z})^{\sigma}.\label{kott hom-2}
\end{equation}
 From the Equations \ref{kott hom-1} and \ref{kott hom-2}, we get
the Kottwitz isomorphism for the induced torus $\boldsymbol{T}$:

\[
w_{T}:T/T_{0}\cong X^{*}((\hat{\boldsymbol{T}}^{I})_{\sigma}),
\]
 where we used the identifications $(X_{*}(\boldsymbol{T})_{I})^{\sigma}=(X^{*}(\hat{\boldsymbol{T}})_{I})^{\sigma}\cong X^{*}((\hat{\boldsymbol{T}}^{I})_{\sigma})$.
This map $w_{T}$ induces the map $\kappa_{\boldsymbol{T}}:\mbox{Hom}(T/T_{0},\mathbb{C}^{\times})\cong\mbox{H}^{1}(W_{k}/I,\hat{\boldsymbol{T}}^{I})$
in Lemma \ref{main lemma}. 

We identify $T/T_{0}$ with $\mathbb{Z}$ by the isomorphisms 
\begin{equation}
T/T_{0}\cong k^{\prime\times}/\mathscr{O}^{\prime\times}\cong\varpi^{\prime\mathbb{Z}}\cong\mathbb{Z}.\label{Z isom-1}
\end{equation}
 Here $\varpi^{\prime}$ is the uniformizer in $k^{\prime}.$ Let
$J$ be a $W_{k}$-stable basis of $X^{*}(\boldsymbol{T}).$ Choose
any $\lambda\in J$ and let $J_{\lambda}\subset J$ be the orbit or
$\lambda$ in $J$ under the action of $I_{k}.$ Let $\chi=\underset{\mu\in J_{\lambda}}{\sum\mu}$.
Then $\chi\in X^{*}(\hat{\boldsymbol{T}})^{I_{k}}$. 

Let $f\in\mbox{Hom}(T/T_{0},\mathbb{C}^{\times})$ and let $c=f(1)$
(under the identification in equation \ref{Z isom-1}). Then both
$\kappa_{\boldsymbol{T}}$ and $\varphi_{\boldsymbol{T}}$ map $f$
to the cocycle $\phi_{f}\in\mbox{H}^{1}(W_{k}/I,\hat{\boldsymbol{T}}^{I})$
defined by $\sigma\mapsto\chi\otimes c.$ Thus $\kappa_{\boldsymbol{T}}=\varphi_{\boldsymbol{T}}.$
This completes the proof of the lemma. \end{proof}
\begin{prop}
There is a unique family of homomorphisms 
\[
\varphi_{\boldsymbol{T}}:Hom(T/T_{0},\mathbb{C}^{\times})\rightarrow H^{1}(W_{k},\hat{\boldsymbol{T}}),
\]
with the following properties:
\begin{enumerate}
\item $\varphi_{\boldsymbol{T}}$ is additive functorial in $\boldsymbol{T}$,
i.e., it is a morphism between two additive functors from the category
of tori over $k$ to the category of abelian groups.
\item For $\boldsymbol{T}=R_{k^{\prime}/k}\mathbb{G}_{m}$, where $k^{\prime}/k$
is a finite Galois extension, $\varphi_{\boldsymbol{T}}$ is the homomorphism
induced from the Local Langlands correspondence for Tori. 
\end{enumerate}
\end{prop}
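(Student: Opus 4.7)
The plan is to build $\varphi_{\boldsymbol{T}}$ explicitly from the ingredients already on hand and then deduce uniqueness by reducing everything to induced tori. For existence I would define
\[
\varphi_{\boldsymbol{T}} := \mathrm{infl} \circ \kappa_{\boldsymbol{T}} : \mbox{Hom}(T/T_{0},\mathbb{C}^{\times}) \xrightarrow{\kappa_{\boldsymbol{T}}} H^{1}(W_{k}/I,\hat{\boldsymbol{T}}^{I}) \xrightarrow{\mathrm{infl}} H^{1}(W_{k},\hat{\boldsymbol{T}}),
\]
where $\kappa_{\boldsymbol{T}}$ is the isomorphism supplied by Lemma \ref{main lemma}. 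Property (1) is then automatic: $\kappa_{\boldsymbol{T}}$ is additive functorial by Lemma \ref{main lemma}, and inflation is a natural transformation in $\boldsymbol{T}$. Property (2) is essentially the content of Lemma \ref{lem5}, which identifies, for $\boldsymbol{T}=R_{k'/k}\mathbb{G}_{m}$, the composite $\mathrm{infl} \circ \kappa_{\boldsymbol{T}}$ with the restriction to $\mbox{Hom}(T/T_{0},\mathbb{C}^{\times})$ of the isomorphism $\varphi_{\boldsymbol{T}}$ coming from the local Langlands correspondence for tori.

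For uniqueness, let $\varphi$ and $\varphi'$ be two families satisfying (1) and (2), and set $\delta_{\boldsymbol{T}} := \varphi_{\boldsymbol{T}} - \varphi'_{\boldsymbol{T}}$. By (2), $\delta$ vanishes on every $R_{k'/k}\mathbb{G}_{m}$, and by additivity it vanishes on any finite product of such tori. Given a general $\boldsymbol{T}$ split by a finite Galois extension $k'/k$, I would choose a resolution
\[
1 \to \boldsymbol{T}_{2} \to \boldsymbol{T}_{1} \to \boldsymbol{T} \to 1,
\]
in which $\boldsymbol{T}_{1}$, and hence $\boldsymbol{T}_{2}$, is a product of Weil restrictions from intermediate fields; such a resolution exists because $X^{*}(\boldsymbol{T})$ is a finitely generated $\mbox{Gal}(k'/k)$-module, hence a quotient of a free permutation module. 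Applying the two functors $\mbox{Hom}((-)/(-)_{0},\mathbb{C}^{\times})$ and $H^{1}(W_{k},\widehat{(-)})$ to this resolution and invoking functoriality of $\delta$ together with the known vanishing of $\delta_{\boldsymbol{T}_{1}}$ and $\delta_{\boldsymbol{T}_{2}}$, a diagram chase forces $\delta_{\boldsymbol{T}}=0$.

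The main obstacle is the diagram chase in the uniqueness step, because neither of the functors involved is exact: the Hom functor is left exact but only into $\mathbb{C}^{\times}$ (which saves us via Pontryagin-style duality after identifying via Kottwitz), while $H^{1}(W_{k},\widehat{(-)})$ sits in a long exact sequence with non-trivial $H^{0}$ and $H^{2}$ terms. One has to track the inflation maps and the Kottwitz identifications compatibly across the resolution. Conceptually this mirrors the uniqueness argument in \cite[Sec.~7]{MR2508725}; the present statement is lighter because Lemma \ref{main lemma} already provides a concrete, functorial model for $\mbox{Hom}(T/T_{0},\mathbb{C}^{\times})$, so one does not need to redo Yu's full construction but only transport his uniqueness through the commutative square $\mathrm{infl}\circ \kappa = \varphi^{LLC}|_{\mathrm{Hom}(T/T_{0},\mathbb{C}^{\times})}$ established for induced tori in Lemma \ref{lem5}.
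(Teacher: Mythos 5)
Your existence argument is exactly the paper's: $\varphi_{\boldsymbol{T}}=\mathrm{infl}\circ\kappa_{\boldsymbol{T}}$, with functoriality from Lemma~\ref{main lemma} and agreement with the LLC on induced tori from Lemma~\ref{lem5}. That part is fine.

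Your uniqueness argument, however, diverges from the paper's and has two genuine gaps. First, the claim that ``$\boldsymbol{T}_{1}$, and hence $\boldsymbol{T}_{2}$, is a product of Weil restrictions'' is false: the kernel (or cokernel) of a map of permutation lattices need not be a permutation lattice, so you cannot invoke property (2) to make $\delta_{\boldsymbol{T}_2}$ vanish, and a two-term resolution does not suffice. (There is also a direction mix-up: $X^{*}(\boldsymbol{T})$ being a \emph{quotient} of a permutation module $P$ corresponds to an \emph{embedding} $\boldsymbol{T}\hookrightarrow\boldsymbol{T}_1$ with $X^{*}(\boldsymbol{T}_1)=P$, not to the surjection $\boldsymbol{T}_1\twoheadrightarrow\boldsymbol{T}$ in your displayed sequence.) Second, you are right that the diagram chase is the real obstacle, but you leave it unresolved: with a surjection $\boldsymbol{T}_1\twoheadrightarrow\boldsymbol{T}$ one only learns that $\iota_{*}\delta_{\boldsymbol{T}}(x)=0$ in $H^{1}(W_k,\hat{\boldsymbol{T}}_1)$, where $\iota_{*}$ is induced by $\hat{\boldsymbol{T}}\hookrightarrow\hat{\boldsymbol{T}}_1$, and that map has no reason to be injective.

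The paper's route is shorter and avoids both issues. It takes the \emph{single} canonical embedding $\boldsymbol{T}\hookrightarrow\boldsymbol{T}'=R_{k'/k}(\boldsymbol{T}\otimes_k k')$, with $\boldsymbol{T}'$ already a direct sum of copies of $R_{k'/k}\mathbb{G}_m$. This induces an injection $T/T_0\hookrightarrow T'/T'_0$, and since $\mathbb{C}^{\times}$ is divisible the restriction map $\mathrm{Hom}(T'/T'_0,\mathbb{C}^{\times})\twoheadrightarrow\mathrm{Hom}(T/T_0,\mathbb{C}^{\times})$ is \emph{surjective}. Given $x$, lift it to $x'$; functoriality then forces $\varphi_{\boldsymbol{T}}(x)$ to be the image of the already-determined $\varphi_{\boldsymbol{T}'}(x')$ under $H^{1}(W_k,\hat{\boldsymbol{T}'})\to H^{1}(W_k,\hat{\boldsymbol{T}})$. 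No second torus $\boldsymbol{T}_2$, no long exact sequence, and no injectivity claim on $H^{1}$ is needed. If you want to salvage your approach, you would have to replace the two-term resolution by this one-step embedding and exploit the exactness of $\mathrm{Hom}(-,\mathbb{C}^{\times})$, at which point you are doing the paper's proof.
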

\begin{proof}
Since the isomorphism $\kappa_{\boldsymbol{T}}$ in Lemma \ref{main lemma}
is additive funtorial in $\boldsymbol{T}$, we thus get an additive
functorial family of homomorphisms
\[
\xymatrix{\varphi_{\boldsymbol{T}}:\mbox{Hom}(T/T_{0},\mathbb{C}^{\times})\ar@{->}[r] & \mbox{H}^{1}(W_{k}/I,\hat{\boldsymbol{T}}^{I})\ar@{^{(}->}^{\mbox{infl}}[r] & \mbox{H}^{1}(W_{k},\hat{\boldsymbol{T}}).}
\]

This shows existence. To show uniqueness, let $\boldsymbol{T}$ be
a given torus defined over $k$ and let $k^{\prime}/k$ be a finite
Galois extension such that $\boldsymbol{T}$ is split over $k^{\prime}$.
Let\\
 $\boldsymbol{T}^{\prime}=R_{k^{\prime}/k}(\boldsymbol{T}\otimes_{k}k^{\prime})$.
Then $\boldsymbol{T}^{\prime}$ is isomorphic to a direct sum of $d=$
dim($\boldsymbol{T}$) tori of the form $R_{k^{\prime}/k}(\mathbb{G}_{m})$
and there is a natural embedding $\boldsymbol{T}\hookrightarrow\boldsymbol{T}^{\prime}$.
This gives an embedding $T/T_{0}\hookrightarrow T^{\prime}/T_{0}^{\prime}$.
By (1), there is a commutative diagram
\[
\xymatrix{\mbox{Hom}(T/T_{0},\mathbb{C}^{\times})\ar@{->}[r]^{\varphi_{\boldsymbol{T}}} & \mbox{H}^{1}(W_{k},\hat{\boldsymbol{T}})\\
\mbox{Hom}(T^{\prime}/T_{0}^{\prime},\mathbb{C}^{\times})\ar@{->>}[u]\ar@{->}[r]^{\varphi_{\boldsymbol{T}^{\prime}}} & \mbox{H}^{1}(W_{k},\hat{\boldsymbol{T}^{\prime}}).\ar@{->}[u]
}
\]
 Notice that $\varphi_{\boldsymbol{T}^{\prime}}$ is completely determined
by (2). Now given $x\in\mbox{Hom}(T/T_{0},\mathbb{C}^{\times})$,
we can lift it to $x^{\prime}\in\mbox{Hom}(T^{\prime}/T_{0}^{\prime},\mathbb{C}^{\times})$.
It follows that $\varphi_{\boldsymbol{T}}(x)$ is the image of $\varphi_{\boldsymbol{T}^{\prime}}(x^{\prime})$
under the vertical arrow on the right, and is hence determined by
(1) and (2). \end{proof}
\begin{thm}
Let $\boldsymbol{T}$ be a torus defined over $k$. Then the local
Langlands correspondence for tori induces the isomorphism
\[
\mbox{Hom}(T/T_{0},\mathbb{C}^{\times})\cong\mbox{H}^{1}(W_{k}/I,\hat{\boldsymbol{T}}^{I}).
\]
 \end{thm}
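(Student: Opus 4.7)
The plan is to deduce the theorem directly from the three preceding results. Let $\varphi_{\boldsymbol{T}}^{\mathrm{LLC}} : \mbox{Hom}(T,\mathbb{C}^{\times}) \to H^{1}(W_{k},\hat{\boldsymbol{T}})$ denote the LLC isomorphism of Section 3. First I would restrict $\varphi_{\boldsymbol{T}}^{\mathrm{LLC}}$ to the subgroup $\mbox{Hom}(T/T_{0},\mathbb{C}^{\times}) \subset \mbox{Hom}(T,\mathbb{C}^{\times})$. Since LLC is additive and functorial in $\boldsymbol{T}$, and the assignment $\boldsymbol{T} \mapsto T/T_{0}$ is itself functorial (a morphism of $k$-tori sends Iwahori into Iwahori), this restriction yields an additive functorial family $\mbox{Hom}(T/T_{0},\mathbb{C}^{\times}) \to H^{1}(W_{k},\hat{\boldsymbol{T}})$.

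Second, composing the Kottwitz-based isomorphism $\kappa_{\boldsymbol{T}}$ of Lemma \ref{main lemma} with the inflation map $H^{1}(W_{k}/I,\hat{\boldsymbol{T}}^{I}) \hookrightarrow H^{1}(W_{k},\hat{\boldsymbol{T}})$ produces a second additive functorial family landing in $H^{1}(W_{k},\hat{\boldsymbol{T}})$. By Lemma \ref{lem5}, the two families coincide whenever $\boldsymbol{T} = R_{k^{\prime}/k}\mathbb{G}_{m}$ is an induced torus. Hence both families satisfy conditions (1) and (2) of the preceding Proposition, and the uniqueness assertion of that Proposition forces them to agree for every torus $\boldsymbol{T}$ defined over $k$.

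Finally, since $\kappa_{\boldsymbol{T}}$ is already an isomorphism onto $H^{1}(W_{k}/I,\hat{\boldsymbol{T}}^{I})$ and the inflation map is injective, the equality of the two families shows that $\varphi_{\boldsymbol{T}}^{\mathrm{LLC}}$ sends $\mbox{Hom}(T/T_{0},\mathbb{C}^{\times})$ bijectively onto the image of inflation, which is precisely $H^{1}(W_{k}/I,\hat{\boldsymbol{T}}^{I})$. This yields the desired isomorphism, proving the theorem.

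The genuine technical work has already been carried out in Lemma \ref{lem5}, where an explicit cocycle computation ($\sigma \mapsto \chi \otimes c$) was used to match $\kappa_{\boldsymbol{T}}$ with $\varphi_{\boldsymbol{T}}^{\mathrm{LLC}}$ on induced tori, and in Lemma \ref{main lemma}, which supplies the Kottwitz half of the picture. The remaining obstacle for the theorem itself is only the bookkeeping of verifying that the restriction of LLC defines an additive functorial family on the subfunctor $\boldsymbol{T} \mapsto \mbox{Hom}(T/T_{0},\mathbb{C}^{\times})$, so that the Proposition's uniqueness can be legitimately invoked; this is routine from the functoriality of $T_{0}$.
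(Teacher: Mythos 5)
Your proposal is correct and takes essentially the same route as the paper: both invoke the uniqueness statement of the preceding Proposition to identify the restricted LLC family with the inflation of the Kottwitz-based family $\kappa_{\boldsymbol{T}}$, using Lemma \ref{lem5} for agreement on induced tori, and then read off the image from the fact that $\kappa_{\boldsymbol{T}}$ is an isomorphism onto $H^{1}(W_{k}/I,\hat{\boldsymbol{T}}^{I})$. You spell out a little more explicitly why the restriction of LLC to $\mbox{Hom}(T/T_{0},\mathbb{C}^{\times})$ is itself an additive functorial family, but this is the same argument.
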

\begin{proof}
By the above Proposition, it follows that the homomorphism\linebreak{}
 $\mbox{Hom}(T/T_{0},\mathbb{C}^{\times})\rightarrow\mbox{H}^{1}(W_{k},\hat{\boldsymbol{T}})$,
determined by the Kottwitz map must be the same as the one determined
by LLC. Therefore, the image of $\mbox{Hom}(T/T_{0},\mathbb{C}^{\times})$
under LLC must be the same as the one determined by $\kappa_{\boldsymbol{T}}$
and which is $\mbox{H}^{1}(W_{k}/I,\hat{\boldsymbol{T}}^{I})$. This
proves the theorem.
\end{proof}

\section{\label{new_sec}Some results about inertia fixed groups}

Let $\boldsymbol{B}$ be a Borel subgroup of $\boldsymbol{G}$ containing
the maximal torus $\boldsymbol{T}$ of $\boldsymbol{G}$. The triple
$(\boldsymbol{G},\boldsymbol{B},\boldsymbol{T})$ determines a based
root datum. Let $(\hat{\boldsymbol{G}},\hat{\boldsymbol{B}},\hat{\boldsymbol{T}})$
be the triple defined over $\mathbb{C}$ which corresponds to the
dual based root datum. Since $\boldsymbol{G}$ is tamely ramified,
the inertia group acts on $(\hat{\boldsymbol{G}},\hat{\boldsymbol{B}},\hat{\boldsymbol{T}})$
via a cyclic group $(\tau)$. Let $\boldsymbol{W}$ be the Weyl group
of $\hat{\boldsymbol{G}}$. Let $\hat{\boldsymbol{T}}_{ad}$ be the
image of $\hat{\boldsymbol{T}}$ in $\hat{\boldsymbol{G}}_{ad}$,
the adjoint group of $\hat{\boldsymbol{G}}$. Let $\hat{\boldsymbol{Z}}$
be the center of $\hat{\boldsymbol{G}}$. 

The following results are given in \cite[Sec. 1.1 (Theorem 1.1.A and the paragraphs following it)]{Ko-Sh}. 
\begin{thm}
[Kottwitz-Shelstad]\label{referee results}\item[$(1)$]$(\hat{\boldsymbol{G}}^{\tau})^{\circ}$
is a connected reductive group.

\item[$(2)$]$(\hat{\boldsymbol{B}}^{\tau})^{\circ}$ is a Borel subgroup
of $(\hat{\boldsymbol{G}}^{\tau})^{\circ}$, containing its maximal
torus $(\hat{\boldsymbol{T}}^{\tau})^{\circ}$.

\item[$(3)$]$\boldsymbol{W}^{\tau}$ is the Weyl group of $(\hat{\boldsymbol{G}}^{\tau})^{\circ}$.

\item[$(4)$]$\hat{\boldsymbol{G}}^{\tau}=\hat{\boldsymbol{Z}}^{\tau}(\hat{\boldsymbol{G}}^{\tau})^{\circ}$.

\item[$(5)$]$(\hat{\boldsymbol{T}}_{ad})^{\tau}$ is connected.

From $(5)$, we immediately obtain

\item[$(5')$]$\hat{\boldsymbol{T}}^{\tau}=\hat{\boldsymbol{Z}}^{\tau}(\hat{\boldsymbol{T}}^{\tau})^{\circ}$.\end{thm}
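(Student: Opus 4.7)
The plan is to derive $(5')$ from $(5)$ by a direct argument with the quotient morphism $q:\hat{\boldsymbol{T}}\to\hat{\boldsymbol{T}}_{ad}$, whose kernel is $\hat{\boldsymbol{Z}}$. Taking $\tau$-fixed points produces a morphism of algebraic groups $q^\tau:\hat{\boldsymbol{T}}^\tau\to(\hat{\boldsymbol{T}}_{ad})^\tau$, and $(5')$ is equivalent to the assertion that $q^\tau$ is already surjective when restricted to the identity component $(\hat{\boldsymbol{T}}^\tau)^\circ$: granted that, for any $t\in\hat{\boldsymbol{T}}^\tau$ one picks $s\in(\hat{\boldsymbol{T}}^\tau)^\circ$ with $q(s)=q(t)$, whence $ts^{-1}\in\ker(q)\cap\hat{\boldsymbol{T}}^\tau=\hat{\boldsymbol{Z}}^\tau$ and therefore $t\in\hat{\boldsymbol{Z}}^\tau(\hat{\boldsymbol{T}}^\tau)^\circ$.

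To establish this surjectivity I would argue by dimensions. The image $q\bigl((\hat{\boldsymbol{T}}^\tau)^\circ\bigr)$ is a closed connected subgroup of $(\hat{\boldsymbol{T}}_{ad})^\tau$, which is itself connected by $(5)$, so equality of the image with the whole group reduces to equality of dimensions. The kernel $\hat{\boldsymbol{Z}}\cap(\hat{\boldsymbol{T}}^\tau)^\circ$ is squeezed between $(\hat{\boldsymbol{Z}}^\tau)^\circ$ and $\hat{\boldsymbol{Z}}^\tau$, hence has dimension $\dim\hat{\boldsymbol{Z}}^\tau$; so $\dim q\bigl((\hat{\boldsymbol{T}}^\tau)^\circ\bigr)=\dim\hat{\boldsymbol{T}}^\tau-\dim\hat{\boldsymbol{Z}}^\tau$. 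The only remaining point is the identity $\dim\hat{\boldsymbol{T}}^\tau-\dim\hat{\boldsymbol{Z}}^\tau=\dim(\hat{\boldsymbol{T}}_{ad})^\tau$.

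That last identity, which is essentially the only real input, is the additivity of $\dim(-)^\tau$ along the short exact sequence $1\to\hat{\boldsymbol{Z}}\to\hat{\boldsymbol{T}}\to\hat{\boldsymbol{T}}_{ad}\to 1$ of diagonalizable $\mathbb{C}$-groups. I would prove it by passing to cocharacter lattices: the resulting short exact sequence of finitely generated $\mathbb{Z}[\tau]$-modules, tensored with $\mathbb{Q}$, splits by Maschke's theorem since $\tau$ has finite order, so $\mathbb{Q}$-ranks of $\tau$-invariants add; as $\dim\hat{\boldsymbol{S}}^\tau$ equals $\mathrm{rank}_{\mathbb{Z}}\,X_*(\hat{\boldsymbol{S}})^\tau$ for a diagonalizable $\hat{\boldsymbol{S}}/\mathbb{C}$, the desired dimension identity follows. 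An equivalent route bypassing lattices uses the long exact cohomology sequence of $1\to\hat{\boldsymbol{Z}}\to\hat{\boldsymbol{T}}\to\hat{\boldsymbol{T}}_{ad}\to 1$: the image of $\hat{\boldsymbol{T}}^\tau$ in $(\hat{\boldsymbol{T}}_{ad})^\tau$ has index bounded by the finite group $H^1(\langle\tau\rangle,\hat{\boldsymbol{Z}})$, and a finite-index closed subgroup of the connected algebraic group $(\hat{\boldsymbol{T}}_{ad})^\tau$ must coincide with it, giving the required surjectivity and completing the argument.
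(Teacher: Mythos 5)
Your argument correctly supplies the detail the paper leaves as ``immediate,'' namely $(5)\Rightarrow(5')$; the remaining parts $(1)$--$(5)$ the paper simply cites from Kottwitz--Shelstad, as you implicitly do. Both of your routes are sound: reducing $(5')$ to surjectivity of $q\colon(\hat{\boldsymbol{T}}^\tau)^\circ\to(\hat{\boldsymbol{T}}_{ad})^\tau$ is the right idea, and that surjectivity follows either from the dimension count (additivity of $\dim(-)^\tau$ over the exact sequence $1\to\hat{\boldsymbol{Z}}\to\hat{\boldsymbol{T}}\to\hat{\boldsymbol{T}}_{ad}\to 1$, proved via $\mathbb{Q}[\tau]$-semisimplicity) or from the finiteness of $H^1(\langle\tau\rangle,\hat{\boldsymbol{Z}})$ together with the fact that a closed finite-index subgroup of the connected group $(\hat{\boldsymbol{T}}_{ad})^\tau$ is everything. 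Two small imprecisions worth flagging: you assert that $(5')$ is \emph{equivalent} to the surjectivity of $q$ on $(\hat{\boldsymbol{T}}^\tau)^\circ$, but you prove, and need, only the implication from surjectivity to $(5')$; and the cocharacter lattices do not literally form a short exact sequence of $\mathbb{Z}[\tau]$-modules (only a left-exact one, since $\hat{\boldsymbol{Z}}$ need not be a torus and cocharacter-lifting can fail), though after $\otimes\mathbb{Q}$ they do, which is all your rank computation uses --- working with character lattices instead would give an honest short exact sequence from the start.
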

\begin{rem}
As pointed out by the referee, these facts can be proved as follows.
The fact that $(\hat{\boldsymbol{T}}_{ad})^{\tau}$ is connected follows
from \cite[remark at the end of Cor. 9.12]{Steinberg}. Using this
and \cite[Lemma 4.6]{Zhu}, it follows that $(\hat{\boldsymbol{G}}_{ad})^{\tau}$
is connected. From this, $(4)$ follows. The fact that $(\hat{\boldsymbol{G}}^{\tau})^{\circ}$
is reductive follows from a general lemma of Kottwitz \cite[10.1.2]{MR757954}.
The fact that $\boldsymbol{W}^{\tau}$ is the Weyl group of $(\hat{\boldsymbol{G}}^{\tau})^{\circ}$
can be proved by adapting the proof of Theorem 8.2 of \cite{Steinberg}.
Since $\tau$ fixes a splitting, this allows one to take $t=1$ in
that proof. 
\end{rem}

\section{Proof of theorem 2}
\begin{lem}
$\mbox{Hom}_{\mathbb{C}}(\mathscr{H}(G,K),\mathbb{C})\cong\mbox{Hom}(T/T_{0},\mathbb{C}^{\times})/W.$\end{lem}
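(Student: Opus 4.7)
The plan is to obtain this isomorphism by invoking a Satake-type description of $\mathscr{H}(G,K)$ for a special maximal parahoric $K$ and then converting algebra homomorphisms into characters of the torus quotient. Concretely, I would first apply the Satake isomorphism of Haines--Rostami \cite{haines2010satake} for special maximal parahoric Hecke algebras, which provides an isomorphism of commutative $\mathbb{C}$-algebras
\[
\mathscr{S}:\mathscr{H}(G,K)\xrightarrow{\;\sim\;}\mathbb{C}[T/T_{0}]^{W},
\]
where the $W$-action on $T/T_{0}$ is the one coming from the relative Weyl group action on $T$ (which descends to $T/T_{0}$ since $W$ preserves the Iwahori subgroup of $T$). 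This step reduces the lemma to computing the $\mathbb{C}$-algebra homomorphisms of $\mathbb{C}[T/T_{0}]^{W}$ into $\mathbb{C}$.

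Next, I would identify the $\mathbb{C}$-algebra homomorphisms of the group algebra with characters of the group. By the Kottwitz isomorphism recalled in \eqref{kottwitz}, the abelian group $T/T_{0}\cong((X^{*}(\hat{\boldsymbol{T}}))_{I})^{\sigma}$ is finitely generated, so $\mathbb{C}[T/T_{0}]$ is a finitely generated commutative $\mathbb{C}$-algebra, and
\[
\mbox{Hom}_{\mathbb{C}\text{-alg}}\bigl(\mathbb{C}[T/T_{0}],\mathbb{C}\bigr)=\mbox{Hom}(T/T_{0},\mathbb{C}^{\times}).
\]

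Finally, I would pass from $\mathbb{C}[T/T_{0}]$ to its $W$-invariants using the standard invariant theory of finite group actions on affine schemes. Since $W$ is finite, $\mathbb{C}[T/T_{0}]$ is integral over $\mathbb{C}[T/T_{0}]^{W}$, so by the going-up/lying-over property the restriction map
\[
\mbox{Hom}(T/T_{0},\mathbb{C}^{\times})\longrightarrow\mbox{Hom}_{\mathbb{C}\text{-alg}}\bigl(\mathbb{C}[T/T_{0}]^{W},\mathbb{C}\bigr)
\]
is surjective, and two characters have the same restriction if and only if they lie in the same $W$-orbit. Composing with $\mathscr{S}^{\ast}$ yields the desired bijection.

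The main obstacle is the first step: one must verify that the Haines--Rostami version of the Satake isomorphism applies verbatim in our setting and that the algebraic torus quotient appearing in their formulation is precisely $T/T_{0}$ as defined here (i.e.\ the quotient by the Iwahori subgroup of $T$, not by the maximal compact subgroup $T_{c}$), together with the $W$-action induced from the relative Weyl group. Once this identification is in place, the remainder of the proof is formal commutative algebra.
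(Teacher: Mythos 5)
Your proof takes exactly the same route as the paper: invoke the Haines--Rostami Satake isomorphism $\mathscr{H}(G,K)\cong\mathbb{C}[T/T_{0}]^{W}$, then pass from $\mathbb{C}$-algebra homomorphisms of the $W$-invariants to $W$-orbits of characters of $T/T_{0}$. You simply spell out the intermediate steps (finite generation via Kottwitz, lying-over for the integral extension) that the paper leaves implicit.
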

\begin{proof}
$\mathscr{H}(G,K)\cong\mathbb{C}[T/T_{0}]^{W}$ by \cite[Theorem 1.0.1]{haines2010satake}.
Therefore 
\begin{eqnarray*}
\mbox{Hom}_{\mathbb{C}}(\mathscr{H}(G,K),\mathbb{C}) & \cong & \mbox{Hom}_{\mathbb{C}}(\mathbb{C}[T/T_{0}]^{W},\mathbb{C})\\
 & \cong & \mbox{Hom}_{\mathbb{C}}(\mathbb{C}[T/T_{0}],\mathbb{C})/W\\
 & \cong & \mbox{Hom}(T/T_{0},\mathbb{C}^{\times})/W.
\end{eqnarray*}
 \end{proof}
\begin{prop}
\label{main_prop}
\[
(\hat{\boldsymbol{T}}^{I})_{\sigma}/W\cong Z(\hat{\boldsymbol{G}})^{I}(\hat{\boldsymbol{G}}^{I})_{ss}^{\circ}\rtimes\sigma/\mbox{Inn}(Z(\hat{\boldsymbol{G}})^{I}(\hat{\boldsymbol{G}}^{I})^{\circ}).
\]

\begin{proof}
Let $\boldsymbol{W}$ be, as before, the Weyl group of $\hat{\boldsymbol{G}}$.
By Theorem \ref{referee results}$(3)$, $\boldsymbol{W}^{\tau}$
is the Weyl group of $\mathrm{(\hat{\boldsymbol{G}}^{\tau})^{\circ}}$.
Since $(\hat{\boldsymbol{G}}^{\tau})^{\circ}$ is reductive (by Theorem
\ref{referee results}$(1)$), it follows from the proof of \cite[Lemma 6.5 ]{MR546608}
that we have a surjection
\begin{equation}
(\mathrm{\hat{\boldsymbol{T}}}^{\tau})^{\circ}\twoheadrightarrow(\mathrm{(\hat{\boldsymbol{G}}^{\tau})^{\circ}\rtimes\sigma)_{ss}/\mbox{Inn}(\mathrm{(\hat{\boldsymbol{G}}^{\tau})^{\circ})}}.
\end{equation}
 By Theorem \ref{referee results}$(5')$, this implies, 
\begin{equation}
\mathrm{\hat{\boldsymbol{T}}}^{\tau}\twoheadrightarrow(\mathrm{\hat{\boldsymbol{Z}}^{\tau}(\hat{\boldsymbol{G}}^{\tau})^{\circ}\rtimes\sigma)_{ss}/\mbox{Inn}(\mathrm{\hat{\boldsymbol{Z}}^{\tau}(\hat{\boldsymbol{G}}^{\tau})^{\circ})}}.
\end{equation}
 Denote the $\sigma$-action on an element $g\in\hat{\boldsymbol{G}}$
by $g^{\sigma}$. Let $s,t\in\mathrm{\hat{\boldsymbol{T}}}^{\tau}$
be such that there exists $g\in\hat{\boldsymbol{Z}}^{\tau}(\hat{\boldsymbol{G}}^{\tau})^{\circ}$
satisfying $g^{-1}sg^{\sigma}=t$. As defined in Section \ref{new_sec},
let $\hat{\boldsymbol{B}}$ be the Borel subgroup of $\hat{\boldsymbol{G}}$
containing the maximal torus $\hat{\boldsymbol{T}}.$ Write $g=unv$
using Bruhat decomposion, where $u$, $v$ are in the unipotent radical
of $\hat{\boldsymbol{B}}$ and $n$ is in the normalizer $\hat{\boldsymbol{N}}$
of $\hat{\boldsymbol{T}}$. Also, $g=zg_{0}$ for some $z\in\hat{\boldsymbol{Z}}^{\tau}$
and $g_{0}\in\mathrm{(\hat{\boldsymbol{G}}^{\tau})^{\circ}}$. Let
$g_{0}=u_{0}n_{0}v_{0}$ be the Bruhat decomposition of $g_{0}$ in
$\mathrm{(\hat{\boldsymbol{G}}^{\tau})^{\circ}}$ with respect to
the Borel $(\hat{\boldsymbol{B}}^{\tau})^{\circ}$ and maximal torus
$(\hat{\boldsymbol{T}}^{\tau})^{\circ}$. Then $u=u_{0}$, $v=v_{0}$
and $n=zn_{0}$. Thus, 
\begin{eqnarray}
g^{-1}sg^{\sigma}=t & \implies & su^{\sigma}n^{\sigma}v^{\sigma}=unvt\nonumber \\
 & \implies & su^{\sigma}s^{-1}sn^{\sigma}v^{\sigma}=untt^{-1}vt\nonumber \\
 & \implies & sn^{\sigma}=nt\nonumber \\
 & \implies & sz^{\sigma}n_{0}^{\sigma}=zn_{0}t.\label{*}
\end{eqnarray}
Let $\bar{n}_{0}$ denote the image of $n_{0}$ in $\boldsymbol{W}$.
Then $n_{0}^{\tau}=n_{0}\implies\bar{n}_{0}\in\boldsymbol{W}^{\tau}$.
Also \ref{*} implies $\bar{n}_{0}^{\sigma}=\bar{n}_{0}$. Thus $\bar{n}_{0}\in\boldsymbol{W}^{\tau,\sigma}$.
Using again, the fact that $(\hat{\boldsymbol{G}}^{\tau})^{\circ}$
is reductive, it follows from the proof of Lemma 6.2 in \cite{MR546608}
that there exists $p\in N_{\mathrm{(\hat{\boldsymbol{G}}^{\tau})^{\circ}}}(\mathrm{\hat{\boldsymbol{T}}}^{\tau})^{\circ}$
such that $p^{\sigma}=p$ and $n_{0}\in p(\mathrm{\hat{\boldsymbol{T}}}^{\tau})^{\circ}$.
This fact is also shown in the proof of \cite[Lemma 4.7]{Zhu}. Let
$n_{0}=pq$ for some $q\in(\mathrm{\hat{\boldsymbol{T}}}^{\tau})^{\circ}$.
Then, 
\[
z^{-1}z^{\sigma}q^{-1}p^{-1}sp^{\sigma}q^{\sigma}=t.
\]
 Let $r=zq\in\mathrm{\hat{\boldsymbol{T}}}^{\tau}$. Then $r^{-1}p^{-1}spr^{\sigma}=t$.
Thus $\bar{s}=\bar{t}$ where $\bar{s}$ and $\bar{t}$ represent
the class of $s$ and $t$ in $\hat{\boldsymbol{T}}_{\sigma}^{\tau}/\boldsymbol{W}^{\tau,\sigma}=(\hat{\boldsymbol{T}}^{I})_{\sigma}/W$. 
\end{proof}
\end{prop}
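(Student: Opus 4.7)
The plan is to realize the desired bijection as a composition: first a surjection from $\hat{\boldsymbol{T}}^{I}$ onto the semisimple twisted conjugacy classes appearing on the right-hand side, then a check that the fibers are exactly the $W$-orbits on the $\sigma$-coinvariants $(\hat{\boldsymbol{T}}^{I})_{\sigma}$. The essential input is the Kottwitz--Shelstad structure theorem from Section \ref{new_sec}, which, by identifying the connected reductive group $(\hat{\boldsymbol{G}}^{\tau})^{\circ}$ together with its Borel, maximal torus, and Weyl group, reduces the problem to classical Steinberg-type statements for a connected reductive group. I would also use the identification of the relative Weyl group $W$ with $\boldsymbol{W}^{\tau,\sigma}$.

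First I would establish the surjection. Inside the connected reductive group $(\hat{\boldsymbol{G}}^{\tau})^{\circ}$, Kottwitz's version of Steinberg's twisted conjugacy theorem (\cite[Lemma 6.5]{MR546608}) yields a surjection
\[
(\hat{\boldsymbol{T}}^{\tau})^{\circ} \twoheadrightarrow \bigl((\hat{\boldsymbol{G}}^{\tau})^{\circ}\rtimes\sigma\bigr)_{ss}/\mathrm{Inn}((\hat{\boldsymbol{G}}^{\tau})^{\circ}).
\]
Using Theorem \ref{referee results}(4) and (5$'$), which give $\hat{\boldsymbol{G}}^{\tau}=\hat{\boldsymbol{Z}}^{\tau}(\hat{\boldsymbol{G}}^{\tau})^{\circ}$ and $\hat{\boldsymbol{T}}^{\tau}=\hat{\boldsymbol{Z}}^{\tau}(\hat{\boldsymbol{T}}^{\tau})^{\circ}$, I would enlarge this to a surjection out of $\hat{\boldsymbol{T}}^{\tau}$ by observing that the central factor $\hat{\boldsymbol{Z}}^{\tau}$ commutes with all inner automorphisms and is transported unchanged by twisted conjugation.

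The main obstacle is the injectivity modulo the action of $\boldsymbol{W}^{\tau,\sigma}$ on $\sigma$-coinvariants. Suppose $s,t\in\hat{\boldsymbol{T}}^{\tau}$ satisfy $g^{-1}sg^{\sigma}=t$ for some $g\in\hat{\boldsymbol{G}}^{\tau}$; I need to show that $s$ and $t$ have the same image in $(\hat{\boldsymbol{T}}^{\tau})_{\sigma}/\boldsymbol{W}^{\tau,\sigma}$. Writing $g=zg_{0}$ with $z\in\hat{\boldsymbol{Z}}^{\tau}$ and $g_{0}\in(\hat{\boldsymbol{G}}^{\tau})^{\circ}$, I would apply the Bruhat decomposition $g_{0}=u_{0}n_{0}v_{0}$ relative to $(\hat{\boldsymbol{B}}^{\tau})^{\circ}$ and match Bruhat cells on both sides of the twisted conjugation identity; the uniqueness of Bruhat expression should then force the image $\bar{n}_{0}\in\boldsymbol{W}^{\tau}$ to be $\sigma$-fixed. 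The most delicate step is then to replace $n_{0}$ by a $\sigma$-fixed lift $p\in N_{(\hat{\boldsymbol{G}}^{\tau})^{\circ}}((\hat{\boldsymbol{T}}^{\tau})^{\circ})$ with the same image in $\boldsymbol{W}^{\tau}$; this is exactly the content of \cite[Lemma 6.2]{MR546608} (also reproduced in \cite[Lemma 4.7]{Zhu}) and rests on $\sigma$ preserving a pinning of $(\hat{\boldsymbol{G}}^{\tau})^{\circ}$. Once $p$ is produced, absorbing $n_{0}p^{-1}\in(\hat{\boldsymbol{T}}^{\tau})^{\circ}$ and $z$ into a single $r\in\hat{\boldsymbol{T}}^{\tau}$ yields $r^{-1}p^{-1}spr^{\sigma}=t$, which says precisely that $s$ and $t$ coincide in $(\hat{\boldsymbol{T}}^{I})_{\sigma}/W$.
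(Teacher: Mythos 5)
Your proposal is correct and follows essentially the same route as the paper: Kottwitz's twisted Steinberg theorem (Lemma~6.5 of the reference) applied inside the connected reductive group $(\hat{\boldsymbol{G}}^{\tau})^{\circ}$ for surjectivity, extension to $\hat{\boldsymbol{T}}^{\tau}$ via the Kottwitz--Shelstad identities $(4)$ and $(5')$, and then Bruhat decomposition combined with the existence of a $\sigma$-fixed representative in $N_{(\hat{\boldsymbol{G}}^{\tau})^{\circ}}((\hat{\boldsymbol{T}}^{\tau})^{\circ})$ (Lemma~6.2 in the same reference, equivalently Zhu's Lemma~4.7) to obtain injectivity modulo $W\cong\boldsymbol{W}^{\tau,\sigma}$ on $\sigma$-coinvariants. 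The only places where you are slightly looser than the paper are the phrase ``$\hat{\boldsymbol{Z}}^{\tau}$ is transported unchanged by twisted conjugation'' (the central factor does pick up a coboundary $g^{-1}g^{\sigma}$, but this lands in the image of $(\hat{\boldsymbol{T}}^{\tau})^{\circ}$ after conjugating $g_0\rtimes\sigma$ to the torus, so the conclusion is unaffected) and the element you absorb into $r$ ($n_0p^{-1}=pqp^{-1}$ versus the paper's $q=p^{-1}n_0$, both of which lie in $(\hat{\boldsymbol{T}}^{\tau})^{\circ}$ and lead to the same identity); neither affects the argument.
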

Using Theorem \ref{referee results}$(4)$, we obtain,
\begin{cor}
$(\hat{\boldsymbol{T}}^{I})_{\sigma}/W\cong(\hat{\boldsymbol{G}}^{I}\rtimes\sigma)_{ss}/\mbox{Inn}(\hat{\boldsymbol{G}}^{I}).$
\end{cor}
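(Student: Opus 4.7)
The corollary is essentially a syntactic rewriting of Proposition \ref{main_prop} via the identity
\[
\hat{\boldsymbol{G}}^{I}=Z(\hat{\boldsymbol{G}})^{I}(\hat{\boldsymbol{G}}^{I})^{\circ}
\]
supplied by Theorem \ref{referee results}$(4)$. Under the tameness hypothesis the inertia acts on $\hat{\boldsymbol{G}}$ through the finite cyclic quotient $\langle\tau\rangle$, so $\hat{\boldsymbol{G}}^{\tau}=\hat{\boldsymbol{G}}^{I}$ and the theorem does apply. The plan is simply to substitute this identity into both the numerator and the denominator appearing on the right-hand side of Proposition \ref{main_prop}.

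For the denominator, the identity above tautologically gives $\mbox{Inn}(Z(\hat{\boldsymbol{G}})^{I}(\hat{\boldsymbol{G}}^{I})^{\circ})=\mbox{Inn}(\hat{\boldsymbol{G}}^{I})$, matching what is wanted. For the numerator, I would write any element of $\hat{\boldsymbol{G}}^{I}\rtimes\sigma$ as $(zh)\sigma$ with $z\in Z(\hat{\boldsymbol{G}})^{I}$ and $h\in(\hat{\boldsymbol{G}}^{I})^{\circ}$, and observe that $z$ is central in $\hat{\boldsymbol{G}}\rtimes\langle\sigma\rangle$ and semisimple (central elements of the connected reductive group $\hat{\boldsymbol{G}}$ are semisimple). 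Consequently the Jordan decomposition of $(zh)\sigma$ inside $\hat{\boldsymbol{G}}\rtimes\langle\sigma\rangle$ differs from that of $h\sigma$ only by the central semisimple factor $z$, so $(zh)\sigma$ is semisimple if and only if $h\sigma$ is. This yields
\[
(\hat{\boldsymbol{G}}^{I}\rtimes\sigma)_{ss}=Z(\hat{\boldsymbol{G}})^{I}(\hat{\boldsymbol{G}}^{I})^{\circ}_{ss}\rtimes\sigma,
\]
which is exactly the numerator on the right-hand side of Proposition \ref{main_prop}. Combining the two identifications with the proposition delivers the corollary.

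The only step that is not pure bookkeeping is the compatibility of Jordan decomposition in the (possibly disconnected) group $\hat{\boldsymbol{G}}\rtimes\langle\sigma\rangle$ with multiplication by a central semisimple element; this is standard, so I do not anticipate any genuine obstacle. The entire proof should therefore reduce to a single short paragraph in the final write-up, which is consistent with the author's framing ``Using Theorem \ref{referee results}$(4)$, we obtain''.
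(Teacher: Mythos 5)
Your core instinct is right: the corollary is a substitution of the identity $\hat{\boldsymbol{G}}^{I}=Z(\hat{\boldsymbol{G}})^{I}(\hat{\boldsymbol{G}}^{I})^{\circ}$ (Theorem \ref{referee results}$(4)$) into Proposition \ref{main_prop}, and that is precisely all the paper does. But you make work for yourself by misreading where the subscript ``ss'' sits in the Proposition. What the proof of Proposition \ref{main_prop} actually establishes (look at the two displayed surjections there) is a bijection onto
\[
\bigl(Z(\hat{\boldsymbol{G}})^{I}(\hat{\boldsymbol{G}}^{I})^{\circ}\rtimes\sigma\bigr)_{ss}\big/\mathrm{Inn}\bigl(Z(\hat{\boldsymbol{G}})^{I}(\hat{\boldsymbol{G}}^{I})^{\circ}\bigr),
\]
i.e.\ the semisimplicity condition is imposed on the \emph{whole coset}, not on the identity component alone. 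With this reading, $(4)$ turns both the numerator and the denominator into their $\hat{\boldsymbol{G}}^{I}$ versions at once, and there is nothing left to prove; no statement about Jordan decompositions is needed.

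The extra lemma you interpolate to compensate is, moreover, not sound as stated. You assert that $z\in Z(\hat{\boldsymbol{G}})^{I}$ is central in $\hat{\boldsymbol{G}}\rtimes\langle\sigma\rangle$. That is false in general: $z$ is fixed by inertia and central in $\hat{\boldsymbol{G}}$, but there is no reason for $\sigma(z)=z$, so $z$ need not commute with $\sigma$, hence need not be central in the semidirect product. Without that centrality you cannot conclude that $z\cdot(h\sigma)_s$ is semisimple (a product of two non-commuting semisimple elements need not be semisimple), so the proposed factorization of the Jordan decomposition of $(zh)\sigma$ breaks down, and the claimed equivalence ``$(zh)\sigma$ semisimple iff $h\sigma$ semisimple'' is not justified. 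In short: drop the Jordan decomposition detour entirely, read the proposition's $_{ss}$ as applying to the full coset, and the corollary follows by the tautological substitution you already identified for the denominator.
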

Write $\boldsymbol{B}=\boldsymbol{T}\boldsymbol{U}$, where $\boldsymbol{B}$
is as before, the Borel subgroup of $\boldsymbol{G}$ containing $\boldsymbol{T},$
and where $\boldsymbol{U}$ is the unipotent radical of $\boldsymbol{B}.$
Using the Iwasawa decomposition \cite[Corr. 9.1.2]{haines2010satake},
we can define the spherical functions
\[
\Phi_{K,\chi}(tuk)=\chi(m)\delta^{1/2}(m),
\]
as in \cite{MR546593}, where $t\in T$, $u\in U$, $k\in K$ and
$\delta$ is the modulus function. The proof of the bijection 
\[
\{\pi|\pi\in\mbox{Rep}(G),\mbox{ \ensuremath{\pi^{K}\neq0\}}}\leftrightarrow\mbox{Hom}_{\mathbb{C}}(\mathscr{H}(\boldsymbol{G},K),\mathbb{C}).
\]
 is then identical to the case when $\boldsymbol{G}$ is unramified
and $K$ is hyperspecial, which is given in \cite{MR546593}. This
completes the proof of the Theorem \ref{main th}.

\section*{Acknowledgements}

I would like to thank my advisor Jiu-Kang Yu for his suggestion of
this problem and for his careful mentoring thoughout. I would also
like to thank Sungmun Cho for many helpful suggestions. I am thankful
to Sandeep Varma for pointing me to the reference for Theorem \ref{referee results}
and for his careful proof reading. 

I am very thankful to the referee for his many helpful inputs, particularly
in the proof of Proposition \ref{main_prop}. He pointed out the fact
that $\hat{\boldsymbol{Z}}^{I}(\hat{\boldsymbol{G}}^{I})^{\circ}$
is simply $ $$\hat{\boldsymbol{G}}^{I}$ and also provided a proof
of this fact. This makes the statement of Proposition \ref{main_prop}
appear more natural. 

\bibliographystyle{plain}
\bibliography{refrences1}

\end{document}